\newtheorem{theorem}{Theorem}[section]
\newtheorem{lemma}[theorem]{Lemma}
\newtheorem{corollary}[theorem]{Corollary}
\theoremstyle{definition}
\numberwithin{equation}{section}
\def\be{\begin{equation}}
\def\ee{\end{equation}}
\newcounter{alphabet}
\begin{document}
\bibliographystyle{amsplain}
\title[Second Hankel determinant of logarithmic coefficients]{Second Hankel determinant of logarithmic coefficients of certain analytic functions}
\author[Vasudevarao Allu]{Vasudevarao Allu}
\address{Vasudevarao Allu\\School of Basic Science \\ Indian Institute of Technology Bhubaneswar\\
India}
\email{avrao@iitbbs.ac.in}

\author[Vibhuti Arora]{Vibhuti Arora}
\address{Vibhuti Arora\\School of Basic Science \\ Indian Institute of Technology Bhubaneswar\\
	India}
\email{vibhutiarora1991@gmail.com}

\subjclass[2010]{30C45, 30C50, 30C55.}
\keywords{Carath\'{e}odory function, Hankel determinant, Logarithmic coefficients, Spirallike functions, Univalent functions}

\begin{abstract}
We consider a
family of all analytic and univalent functions (i.e., one-to-one) in the unit disk $\mathbb{D}:=\{z\in \mathbb{C}:|z|<1\}$ of the form $f(z)=z+a_2z^2+a_3z^3+\cdots$. In this paper, we obtain the sharp bounds of the second Hankel determinant of Logarithmic coefficients for some subclasses of analytic functions.
\end{abstract}

\maketitle

\section{Introduction}\label{Introduction}

Let $\mathcal{A}$ denote the class of functions $f$ of the form
\begin{equation}\label{S}
f(z)=z+\sum_{n=2}^{\infty}a_nz^n,
\end{equation}
 which are analytic in the unit disk $\mathbb{D}:=\{z\in \mathbb{C}:|z|<1\}$. Let $\mathcal{S}$ be the class all functions $f\in \mathcal{A}$ that are univalent (i.e., one-to-one) in $\mathbb{D}$. For a general theory of univalent functions, we refer the classical books \cite{Dur83, Goo83}.
 
 For $q,n \in \mathbb{N}$, the {\em Hankel determinant} $H_{q,n}(f)$ of a function $f\in \mathcal{A}$ of the form \eqref{S} is defined as
 \begin{equation*}
 	H_{q,n}(f) := \begin{vmatrix}
 		a_n     & a_{n+1}  & \dots & a_{n+q-1} \\ 
 		a_{n+1}       &a_{n+2} & \dots & a_{n+q} \\
 		\vdots  & \vdots&  \vdots       & \vdots    \\
 		a_{n+q-1}      & a_{n+q} & \dots &a_{n+2(q-1)} \\ 
 	\end{vmatrix}.
 \end{equation*}
 General results for Hankel determinants of any degree with their applications can be
 found in \cite{Pom66,Pom67}. The problem of computing the bounds of Hankel determinants in a given
 family of analytic functions attracted the attention of many mathematicians (see \cite{ALT} and reference therein). 
 In particular, for $q=2$ and $n=1$, $H_{2,1}(f)=a_1a_3-a_2^2$
 is usually called the second Hankel determinant. For the class $\mathcal{S}$, the bound of $H_{2,1}(f)=a_1 a_3-a_2^2$ was estimated by Bieberbach in 1916.
 
 The {\em Logarithmic coefficients} $\gamma_n$ of $f\in \mathcal{S}$ are defined by the following series expansion:
 \begin{equation}\label{logcoefficients}
F_f(z):= \log \cfrac{f(z)}{z}=2\sum_{n=1}^{\infty}\gamma_n(f)z^n,\quad z\in \mathbb{D}.
 \end{equation}
The logarithmic coefficients have great importance as they play a crucial role in Milin conjecture \cite{Milin71} (see also \cite[p. 155]{Dur83}). Milin conjectured that for $f\in \mathcal{S}$ and $n\ge 2$,
$$
\sum_{m=1}^n\sum_{k=1}^{m}\bigg(k|\gamma_k|^2-\cfrac{1}{k}\bigg)\le 0,
$$
where the equality holds if, and only if, $f$ is a rotation of the Koebe function. De Branges \cite{DeB1} proved that Milin conjecture which confirmed the famous Bieberbach conjecture. On the other hand, one of reasons for more attention has been given to the Logarithmic coefficients is that the sharp bound for the class $\mathcal{S}$ is known only for $\gamma_1$ and $\gamma_2$, namely
\begin{equation}\label{gamma2}
|\gamma_1|\le 1 \text{ and } |\gamma_2|\le \cfrac{1}{2}(1+2e^{-2})=0.635\dots.
\end{equation}
It is still an open problem to find the sharp bounds of $\gamma_n,~n\ge 3$, for the class $\mathcal{S}$. Note that for the Koebe function $k(z)=z/(1-z)^2,\, z\in \mathbb{D}$, it is easy to see that $\gamma_n=1/n$ for each $n\ge 1$. Therefore it is expected that $|\gamma_n|\le 1/n$, since the Koebe function plays a role of extremal function in many problems of geometric function theory. But it was shown that, this is not true even for $n=2$, as we can seen in equation \eqref{gamma2}. The problem of finding the sharp bound of $|\gamma_n|$ for the class $\mathcal{S}$ and for its various subclasses are studied recently by several authors in different contexts, for instance see \cite{AV18, VAP18, Girela2000,AVA18, Pawel21}. 
 
 If $f$ is given by \eqref{S}, then by differentiating \eqref{logcoefficients} and equating coefficients, we obtain
 $$
 \gamma_1=\cfrac{1}{2}~a_2, \gamma_2=\cfrac{1}{2}\big(a_3-\cfrac{1}{2}~a_2^2\big), \text{ and } \gamma_3=\cfrac{1}{2}\big(a_4-a_2a_3+\cfrac{1}{3}~a_2^3\big).
 $$

Due to the great importance of logarithmic coefficients in the recent years, it is appropriate and interesting to compute the Hankel determinant whose entries are logarithmic coefficients. In particular, the second Hankel determinant of $F_f/2$ is defined as
\begin{equation}\label{Hankel}
H_{2,1}(F_f/2)=\gamma_1\gamma_3-\gamma_2^2=\cfrac{1}{4}\bigg(a_2a_4-a_3^2+\cfrac{1}{12}\,a_2^4\bigg).
\end{equation}
As usual, instead of the whole class $\mathcal{S}$ one can take into account their subclasses for which the problem of finding sharp estimates of Hankel determinant of logarithmic coefficients can be studied. The problem of computing the sharp bounds of $H_{2,1}(F_f/2)$ was considered in \cite{adam} for starlike and convex functions.

It is now appropriate to remark that $H_{2,1}(F_f/2)$ is invariant under rotation since for $f_{\theta}(z):=e^{-i\theta}f(e^{i\theta}z),\,\theta\in \mathbb{R}$ when $f\in \mathcal{S}$ we have
\begin{equation*}
H_{2,1}(F_{f_\theta}/2)=\cfrac{e^{4i\theta}}{4}\bigg(a_2a_4-a_3^2+\cfrac{1}{12}a_2^4\bigg)=e^{4i\theta}H_{2,1}(F_f/2).
\end{equation*}

The main purpose of this paper is to obtain the sharp upper
bounds of the second Hankel determinant of the logarithmic coefficients, {\it i.e.,} $|H_{2,1}(F_f/2)|$, for various subclasses of the class $\mathcal{A}$.

\section{Preliminary results}
In this section, we present key lemmas which will be used to prove the main results of this paper.
Let $\mathcal{P}$ denote the class of all analytic functions $p$ having positive real part in $\mathbb{D}$, with the form
\begin{equation}\label{P}
p(z)=1+c_1z+c_2z^2+c_3z^3\cdots.
\end{equation}
A member of $\mathcal{P}$ is called a {\em Carath\'{e}odory function}. It is known that $|c_n|\le 2,\, n\ge 1$ for a function $p\in \mathcal{P}$ (see \cite{Dur83}).\\[2mm]

Parametric representations of the coefficients are often useful. Libera and Z\l{}otkiewicz \cite{Lib82,Lib83} derived the following parameterizations of possible
values of $c_2$ and $c_3$. 
\begin{lemma}\cite{Lib82,Lib83}\label{Pclass}
If $p\in\mathcal{P}$ is of the form \eqref{P} with $c_1\ge 0$, then 
\begin{align}
c_1&=2p_1,\label{c2}\\
c_2&=2p_1^2+2(1-p_1^2)p_2,\label{c2c3}
\end{align}
and 
\begin{align}\label{c3}
c_3=2p_1^3+4(1-p_1^2)p_1 p_2-2(1-p_1^2)p_1p_2^2+2(1-p_1^2)(1-|p_2|^2)p_3
\end{align}
for some $p_1\in [0,1]$ and $p_2,\,p_3\in \overline {\mathbb{D}}:=\{z\in \mathbb{C}:|z|\le 1\}$.
	
For $p_1\in \mathbb{T}:=\{z\in \mathbb{C}:|z| =1\}$, there is a unique function $p\in \mathcal{P}$ with $c_1$ as in \eqref{c2}, namely 
$$
p(z)=\cfrac{1+p_1z}{1-p_1z}, \quad z\in \mathbb{D}.
$$
	
For $p_1\in \mathbb{D}$ and $p_2\in \mathbb{T}$, there is a unique function $p\in \mathcal{P}$ with $c_1$ and $c_2$ as in \eqref{c2} and \eqref{c2c3}, namely
\begin{equation}\label{P}
p(z)=\cfrac{1+(p_1+\overline{p_1}p_2)z+p_2z^2}{1-(p_1-\overline{p_1}p_2)z-p_2z^2}.
\end{equation}

For $p_1,~p_2\in \mathbb{D}$ and $p_3\in \mathbb{T}$, there is unique function $p\in \mathcal{P}$ with $c_1$, $c_2$, and $c_3$ as in  \eqref{c2}-\eqref{c3}, namely,
$$
p(z)=\cfrac{1+(\overline{p_2}p_3+\overline{p_1}p_2+p_1)z+(\overline{p_1}p_3+p_1\overline{p_2}p_3+p_2)z^2+p_3z^3}{1+(\overline{p_2}p_3+\overline{p_1}p_2-p_1)z+(\overline{p_1}p_3-p_1\overline{p_2}p_3-p_2)z^2-p_3z^3}\quad z\in \mathbb{D}.
$$
\end{lemma}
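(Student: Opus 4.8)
The plan is to use the classical bijection between $\mathcal{P}$ and the Schwarz class, followed by the Schur continued-fraction algorithm; this is the route of Libera and Z\l{}otkiewicz. Given $p\in\mathcal{P}$ of the form \eqref{P}, put $\omega(z)=(p(z)-1)/(p(z)+1)$. Since $p$ has positive real part in $\mathbb{D}$, $\omega$ is an analytic self-map of $\mathbb{D}$ with $\omega(0)=0$, and
\begin{equation*}
p(z)=\frac{1+\omega(z)}{1-\omega(z)}=1+2\omega(z)+2\omega(z)^2+2\omega(z)^3+\cdots .
\end{equation*}
By the Schwarz lemma the function $\omega_1(z):=\omega(z)/z$ is analytic in $\mathbb{D}$ and maps $\mathbb{D}$ into $\overline{\mathbb{D}}$. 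Put $p_1:=\omega_1(0)\in\overline{\mathbb{D}}$; comparing the coefficient of $z$ in the displayed expansion gives $c_1=2p_1$, so the hypothesis $c_1\ge0$ forces $p_1\in[0,1]$, and in particular $\overline{p_1}=p_1$.

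Next I would iterate. If $|p_1|<1$, composing $\omega_1$ with the disk automorphism $w\mapsto(w-p_1)/(1-\overline{p_1}w)$ produces an analytic self-map of $\mathbb{D}$ into $\overline{\mathbb{D}}$ that vanishes at $0$, so a second application of the Schwarz lemma yields an analytic $\omega_2\colon\mathbb{D}\to\overline{\mathbb{D}}$ with
\begin{equation*}
\omega_1(z)=\frac{p_1+z\,\omega_2(z)}{1+\overline{p_1}\,z\,\omega_2(z)} .
\end{equation*}
Setting $p_2:=\omega_2(0)$ and, when $|p_2|<1$, repeating the step gives $\omega_3\colon\mathbb{D}\to\overline{\mathbb{D}}$ with $\omega_2(z)=(p_2+z\,\omega_3(z))/(1+\overline{p_2}\,z\,\omega_3(z))$ and $p_3:=\omega_3(0)$, where $p_2,p_3\in\overline{\mathbb{D}}$. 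Expanding these relations to order $z^3$ and using $\overline{p_1}=p_1$ yields
\begin{equation*}
\omega(z)=p_1z+(1-p_1^2)p_2\,z^2+(1-p_1^2)\bigl[(1-|p_2|^2)p_3-p_1p_2^2\bigr]z^3+\cdots ,
\end{equation*}
and substituting this into $p=1+2\omega+2\omega^2+2\omega^3+\cdots$ and reading off the coefficients of $z$, $z^2$, $z^3$ reproduces exactly \eqref{c2}, \eqref{c2c3}, and \eqref{c3}. If $|p_1|=1$, or in the next stage $|p_2|=1$, the undetermined parameters ($p_2$, respectively $p_3$) may be assigned any values in $\overline{\mathbb{D}}$, since the coefficients $c_1,c_2,c_3$ are then already fixed; this is the substance of the uniqueness clauses.

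For those clauses I would apply the maximum modulus principle at each stage. If $p_1\in\mathbb{T}$, then $\omega_1\colon\mathbb{D}\to\overline{\mathbb{D}}$ attains modulus $1$ at the interior point $0$, hence $\omega_1\equiv p_1$; thus $\omega(z)=p_1z$ and $p(z)=(1+p_1z)/(1-p_1z)$, and this is the only $p\in\mathcal{P}$ with the prescribed $c_1$. If $p_1\in\mathbb{D}$ and $p_2\in\mathbb{T}$, the first Schur step is legitimate and $\omega_2\equiv p_2$ for the same reason, so $\omega(z)=z(p_1+p_2z)/(1+\overline{p_1}p_2z)$; forming $(1+\omega)/(1-\omega)$ and simplifying gives the quoted rational function \eqref{P}. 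Finally, if $p_1,p_2\in\mathbb{D}$ and $p_3\in\mathbb{T}$, both Schur steps are legitimate and $\omega_3\equiv p_3$; carrying out the two back-substitutions for $\omega$ and then computing $(1+\omega)/(1-\omega)$ gives the last displayed cubic-over-cubic expression, with uniqueness again coming from the fact that the Schwarz/maximum-modulus argument determines $\omega$ completely.

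The one genuinely technical point is the bookkeeping in the last two displays: one must verify that in the third-order expansion of the nested M\"obius expressions all the cross terms collapse into the compact right-hand side of \eqref{c3}, and that in the case $p_3\in\mathbb{T}$ the numerator and denominator of $(1+\omega)/(1-\omega)$ rearrange into precisely the cubic polynomials written in the statement. Everything else is a direct application of the Schwarz lemma and the maximum principle.
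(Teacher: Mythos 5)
Your argument is correct: the Schur-algorithm decomposition $\omega=(p-1)/(p+1)$, $\omega_1=\omega/z$, iterated M\"obius/Schwarz steps with parameters $p_k=\omega_k(0)$, together with the maximum-modulus rigidity when a parameter lies on $\mathbb{T}$, reproduces \eqref{c2}--\eqref{c3} and all three uniqueness cases, and I checked that the third-order bookkeeping and the two rational expressions come out exactly as stated. The paper itself offers no proof of this lemma (it is quoted from Libera and Z\l{}otkiewicz), and your route is precisely the classical argument of those cited sources, so there is nothing to reconcile beyond the minor remark that when $|p_1|<1$ the map $\omega_1$ actually sends $\mathbb{D}$ into $\mathbb{D}$ (by the maximum principle), which is what legitimizes the next Schur step.
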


Next we recall the following well-known result due to Choi {\it et al.} \cite{CKS19}. Lemma \ref{Y(ABC)} plays an important role in the proof of our main results. 

\begin{lemma}\cite{CKS19}\label{Y(ABC)}
Let $A,\, B,\, C$ be real numbers and 
$$
Y(A,B,C):=\max_{z\in \overline{\mathbb{D}}}(|A+Bz+Cz^2|+1-|z|^2).
$$
(i) If $AC\ge0$, then 
$$
Y(A,B,C)=\left \{
\begin{array}{ll}
	|A|+|B|+|C|, & {\mbox{ for }} |B|\ge 2(1-|C|),
	\\[5mm]
	1+|A|+\cfrac{B^2}{4(1-|C|)}, & {\mbox{ for }} |B|< 2(1-|C|).
\end{array}
\right.
$$
(ii) If $AC<0$, then 
$$
Y(A,B,C)=\left \{
\begin{array}{ll}
		1-|A|+\cfrac{B^2}{4(1-|C|)}, & \quad -4AC(C^{-2}-1)\le B^2 \wedge |B|< 2(1-|C|),
	\\[5mm]
	1+|A|+\cfrac{B^2}{4(1+|C|)}, & \quad B^2<\min
	\{4(1+|C|)^2,-4AC(C^{-2}-1)\},
	\\[5mm]
	R(A,B,C), &\quad\text{ otherwise},
\end{array}
\right.
$$
where
$$
R(A,B,C)=\left \{
\begin{array}{ll}
		|A|+|B|+|C|, & \quad |C|(|B|+4|A|)\le |AB|,
	\\[5mm]
		-|A|+|B|+|C|, & \quad |AB|\le |C|(|B|-4|A|),
	\\[5mm]
	(|A|+|C|)\sqrt{1-\cfrac{B^2}{4AC}}\,, &\quad\text{ otherwise}.
\end{array}
\right.
$$
\end{lemma}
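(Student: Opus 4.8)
\medskip
\noindent\emph{Proof proposal.} The plan is to compute the maximum in two stages: fix $r=|z|\in[0,1]$ and maximize $|A+Bz+Cz^2|$ over the argument of $z$ to get a function $M(r)$, and then maximize $M(r)+1-r^2$ over $r$. First I would normalize: the quantity $|A+Bz+Cz^2|+1-|z|^2$ is invariant under $z\mapsto -z$ and under $(A,B,C)\mapsto(-A,-B,-C)$, so we may assume $A\ge 0$ and $B\ge 0$, after which the sign of $C$ is determined by $\operatorname{sgn}(AC)$ (so $C\ge 0$ in part~(i), and $A>0>C$ in part~(ii)). Writing $z=re^{i\theta}$ and $t=\cos\theta\in[-1,1]$, a short computation gives
\[
|A+Bz+Cz^2|^2 = 4ACr^2\,t^2+2Br(A+Cr^2)\,t+(A-Cr^2)^2+B^2r^2,
\]
so $M(r)^2$ is the maximum over $t\in[-1,1]$ of a real quadratic whose leading coefficient has the sign of $AC$; this is the fact the whole proof turns on.

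\emph{Case $AC\ge 0$.} Here that quadratic is convex, hence maximized at $t=\pm1$, i.e.\ $M(r)=|A|+|B|r+|C|r^2$ with the extremal point at $z=\pm r$. Thus $Y(A,B,C)=1+|A|+\max_{r\in[0,1]}\bigl((|C|-1)r^2+|B|r\bigr)$. Since $1-|C|$ sits in a denominator in the statement we have $|C|<1$, so the bracket is a downward parabola with vertex $r_0=|B|/\bigl(2(1-|C|)\bigr)$; the two stated subcases are $r_0\ge 1$ (where the maximum is at $r=1$, giving $|A|+|B|+|C|$) and $r_0<1$ (where it is at $r_0$, giving $1+|A|+B^2/(4(1-|C|))$ after completing the square).

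\emph{Case $AC<0$.} Now the quadratic in $t$ is concave, with unconstrained maximizer $t_r=-B(A+Cr^2)/(4ACr)$. If $|t_r|\le1$, completing the square and using $4ACr^2-(A+Cr^2)^2=-(A-Cr^2)^2$ gives $M(r)=(|A|+|C|r^2)\sqrt{1-B^2/(4AC)}$; if $|t_r|>1$ the maximum over $|z|=r$ is at $z=\pm r$, so $M(r)=|A\pm Br+Cr^2|$. One must then maximize $M(r)+1-r^2$ over $r\in[0,1]$, and here the case distinctions proliferate: for the optimal $r$ one has to know which of the three extremal configurations ($z=r$, $z=-r$, interior argument) is active and whether that $r$ is an interior critical point or the endpoint $r=1$. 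I expect the thresholds in the statement to be exactly the boundary events of this analysis: $-4AC(C^{-2}-1)\le B^2$ is equivalent to $|C|\sqrt{1-B^2/(4AC)}\ge1$, i.e.\ to the interior-argument contribution being increasing in $r$; $|B|<2(1-|C|)$ and $B^2<4(1+|C|)^2$ say that the relevant downward parabolas in $r$ have their vertices in $[0,1)$; and the three branches of $R(A,B,C)$ are the value at $r=1$ of whichever configuration is then active there — the interior-argument branch (the term with $\sqrt{1-B^2/(4AC)}$) when $t_1\in[-1,1]$, and $z=1$ or $z=-1$ otherwise, the inequalities $|C|(|B|+4|A|)\le|AB|$ and $|AB|\le|C|(|B|-4|A|)$ being precisely the two ways $t_1$ can leave $[-1,1]$. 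What remains is to check that these conditions tile the parameter space and that the formulas match along the common boundaries.

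The main obstacle is exactly this last bookkeeping in the case $AC<0$: three nested decisions ($|t_r|$ versus $1$, which endpoint, interior critical point in $r$ versus $r=1$) interact, so both the case split and the resulting piecewise formula need careful verification. By contrast the geometric core — the two-stage maximization together with the convexity/concavity of the quadratic in $t=\cos\theta$ — is entirely elementary.
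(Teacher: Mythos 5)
The paper offers no proof of this lemma: it is quoted from \cite{CKS19}, so there is no in-paper argument to compare against, and your proposal must stand on its own. Its skeleton is indeed the standard (and the source's) route: your identity $|A+Bz+Cz^2|^2=4ACr^2t^2+2Br(A+Cr^2)t+(A-Cr^2)^2+B^2r^2$ with $t=\cos\theta$ is correct, and the convex case $AC\ge 0$ you essentially complete (you do not need $|C|<1$; when $|C|\ge 1$ the bracket $(|C|-1)r^2+|B|r$ is increasing, so the first subcase applies automatically). The genuine gap is that part (ii), which is the entire substance of the lemma, is left as a programme: you correctly interpret the thresholds (e.g.\ $-4AC(C^{-2}-1)\le B^2$ as $|C|\sqrt{1-B^2/(4AC)}\ge 1$, and the $R$-branches as the position of $t_1=-B(A+C)/(4AC)$ relative to $[-1,1]$), but you neither show that the stated conditions exhaust and separate the parameter space nor derive the claimed maxima from the competition between the three candidate configurations ($z=\pm r$, interior argument) and the two candidate radii (interior critical point, $r=1$). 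Since you yourself defer exactly this, what you have is a sound plan rather than a proof; the deferred comparisons are where all the work of the lemma lives.

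One concrete reason that bookkeeping cannot be waved through: carried out faithfully, your own outline contradicts the statement as printed here. Under your normalization $A,B\ge 0>C$, the value at $z=1$ is $|A+B+C|=|A|+|B|-|C|$ (the condition $|C|(|B|+4|A|)\le|AB|$ forces $|C|<|A|$), not $|A|+|B|+|C|$. And the printed first branch of $R$ is in fact false: take $(A,B,C)=(1,10,-0.1)$, which falls into the ``otherwise'' regime with $|C|(|B|+4|A|)=1.4\le |AB|=10$; since $t_r=25(1-0.1r^2)/r\ge 22.5$, the maximum on each circle $|z|=r$ is at $z=r$, so $Y=\max_{0\le r\le 1}\,(2+10r-1.1r^2)=10.9=|A|+|B|-|C|$, whereas the displayed formula predicts $11.1$. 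So the transcription above carries a sign error in that branch (the original lemma reads $|A|+|B|-|C|$ there), and a completed version of your argument would prove the corrected statement, not the one printed — which is precisely the kind of discrepancy the missing case analysis is needed to detect.
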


\section{Main results}
For a better clarity in our presentation, we divide this section into several subsections 
consisting of different families of functions from the class $\mathcal{A}$ and prove our main results
associated with those classes of functions. 

\subsection{The class $\mathcal{S}_{\beta}(\alpha)$}
\medskip\noindent

To state our first result we need to introduce the following definitions: A function $f\in \mathcal{A}$ is called {\em starlike} if $f(\mathbb{D})$ is a starlike domain with respect to origin. The
class of univalent starlike functions is denoted by $\mathcal{S}^*$. 
There is one natural generalization of starlike functions is $\beta$-spirallike functions of order $\alpha$ which leads to a useful criterion for univalency. The family $\mathcal{S}_{\beta}(\alpha)$ of 
{\em $\beta$-spirallike functions of order $\alpha$} is defined
by
$$
\mathcal{S}_{\beta}(\alpha)=\bigg\{ f\in \mathcal{A}:{\rm Re}\bigg(e^{-i\beta}\cfrac{zf'(z)}{f(z)}\bigg)>\alpha \cos \beta  \bigg\},
$$
where $0\le \alpha< 1$ and $-\pi/2< \beta< \pi/2$. It is known that each function in $\mathcal{S}_{\beta}(\alpha)$ is univalent in $\mathbb{D}$ (see \cite{Lib67}). Functions in $\mathcal{S}_\beta(0)$  are called \textit{$\beta$-spirallike}, but they do not necessarily
belong to the starlike family $\mathcal{S}^*$. For example, the function
$f(z)=z(1-iz)^{i-1}$ is ${\pi/4}$-spirallike but $f \notin\mathcal{S}^*$.
The class $\mathcal{S}_\beta (0)$ was introduced by ${\rm \check{S}}$pa${\rm\check{c}}$ek \cite{Spacek-33}
(see also \cite{Dur83}). Moreover, $\mathcal{S}_0 (\alpha)=:\mathcal{S}^*{(\alpha)}$ is the usual class of starlike functions of order $\alpha$,
and $\mathcal{S}^*(0)=\mathcal{S}^*$. Recall that the class  $\mathcal{S}_{\beta}(\alpha)$, for $0\le \alpha<1$, is studied by several authors in different perspective (see, for instance \cite{KS20,Lib67}).\\[2mm]

Now we will prove the first main result of this paper.
\begin{theorem} \label{a2a1}
	Let $-\pi/2< \beta< \pi/2$ and $0\le \alpha <1$. For every $f\in \mathcal{S}_{\beta}(\alpha)$ of the form \eqref{S}, we have
	\begin{equation}\label{inequalityS}
		|H_{2,1}(F_f/2)|\le \cfrac{(1-\alpha)^2\cos^2 \beta}{4}.
	\end{equation}
	Equality in \eqref{inequalityS} holds only for the rotation of the function
	$$
	f_1(z)=\cfrac{z}{(1-z^2)^{(1-\alpha)\cos \beta e^{i\beta}}}.
	$$
\end{theorem}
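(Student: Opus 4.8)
The plan is to express the three relevant Taylor coefficients $a_2, a_3, a_4$ of $f \in \mathcal{S}_\beta(\alpha)$ in terms of the coefficients $c_1, c_2, c_3$ of an associated Carathéodory function, then substitute into the formula \eqref{Hankel} for $H_{2,1}(F_f/2)$, and finally bound the resulting expression using the Libera–Złotkiewicz parametrization (Lemma \ref{Pclass}) together with the $Y(A,B,C)$ estimate (Lemma \ref{Y(ABC)}). Concretely, for $f \in \mathcal{S}_\beta(\alpha)$ write $e^{-i\beta} zf'(z)/f(z) = \alpha\cos\beta + (\cos\beta)\, e^{-i\beta} p(z)$ for some $p \in \mathcal{P}$ of the form \eqref{P}; equivalently $zf'(z)/f(z) = 1 + (1-\alpha)\cos\beta\, e^{i\beta}(p(z)-1)$. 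Setting $\lambda := (1-\alpha)\cos\beta\, e^{i\beta}$ for brevity, I would equate coefficients in $zf'(z) = f(z)\bigl(1 + \lambda(c_1 z + c_2 z^2 + \cdots)\bigr)$ to get $a_2 = \lambda c_1$, $a_3 = \tfrac{1}{2}\lambda\bigl(c_2 + \lambda c_1^2\bigr)$ (or similar, up to the precise bookkeeping), and $a_4$ as a cubic expression in $c_1, c_2, c_3$ with coefficients polynomial in $\lambda$. Since $H_{2,1}(F_f/2)$ is rotation-invariant and $|\lambda| = (1-\alpha)\cos\beta$ is what appears in the target bound, I expect the $\theta$-dependence in $\beta$ to enter only through powers of $\lambda$ and $\bar\lambda$, and the final bound $(1-\alpha)^2\cos^2\beta/4 = |\lambda|^2/4$ to emerge naturally.

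Next I would plug these into $H_{2,1}(F_f/2) = \tfrac14\bigl(a_2a_4 - a_3^2 + \tfrac1{12}a_2^4\bigr)$ and simplify to obtain a polynomial $\Phi(c_1,c_2,c_3)$. Invoking rotational invariance of $H_{2,1}(F_f/2)$, I may assume $c_1 = 2t$ with $t \in [0,1]$, and then substitute the parametrizations \eqref{c2}, \eqref{c3} for $c_2, c_3$ in terms of $t$ and $p_2, p_3 \in \overline{\mathbb{D}}$. The expression is linear in $p_3$, so after collecting terms it should take the shape $|H_{2,1}(F_f/2)| \le \Psi(t)\bigl(|A(t,p_2) + B(t,p_2)p_3 + C(t,p_2)p_3^2| + 1 - |p_3|^2\bigr)$ for suitable $A, B, C$ and a nonnegative prefactor $\Psi(t)$ — precisely the configuration handled by $Y(A,B,C)$. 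Applying Lemma \ref{Y(ABC)} (splitting into the cases $AC \ge 0$ and $AC < 0$ as needed), one reduces to a maximization over $t \in [0,1]$ and $|p_2| \le 1$ of an explicit function, which I expect to be maximized at $t = 0$ with the maximum equal to $|\lambda|^2/4$.

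The main obstacle will be the case analysis forced by Lemma \ref{Y(ABC)}: after the substitution the signs and relative sizes of $A, B, C$ (which depend on $t$ and $|p_2|$) need to be tracked carefully, and one must verify in each branch that the resulting bound never exceeds $(1-\alpha)^2\cos^2\beta/4$. A secondary subtlety is that the coefficients $A, B, C$ here are in general complex-valued (because $\lambda$ is complex), whereas Lemma \ref{Y(ABC)} is stated for real $A, B, C$; so I would first need to massage the expression — likely by factoring out an appropriate power of $\lambda/|\lambda|$ and using $|H_{2,1}(F_f/2)|$ rather than $H_{2,1}(F_f/2)$ itself — so that the quantity inside the maximum genuinely has the form $|A + Bp_3 + Cp_3^2| + 1 - |p_3|^2$ with real $A, B, C$. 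For the sharpness claim, I would check that the function $f_1(z) = z/(1-z^2)^{\lambda}$ indeed lies in $\mathcal{S}_\beta(\alpha)$ (it corresponds to $p(z) = (1+z^2)/(1-z^2)$, i.e. $c_1 = 0$, $c_2 = 2$), compute its logarithmic coefficients directly — $F_{f_1}(z)/2 = -\tfrac12\lambda\log(1-z^2) = \tfrac12\lambda\sum z^{2n}/n$, so $\gamma_1 = 0$, $\gamma_2 = \lambda/4$, $\gamma_3 = 0$ — giving $|H_{2,1}| = |\gamma_2|^2 = |\lambda|^2/16$; one then reconciles this with the stated bound (noting the factor conventions in \eqref{Hankel}), and invokes the equality conditions inside Lemma \ref{Y(ABC)} to argue these are the only extremal functions up to rotation.
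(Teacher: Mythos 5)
Your plan follows the paper's proof essentially step for step: the same Carath\'eodory representation, the reduction $H_{2,1}(F_f/2)=\frac{(1-\alpha)^2e^{2i\beta}\cos^2\beta}{48}\,(4c_1c_3-3c_2^2)$ (so the complex factor you worry about simply factors out before applying anything), rotation invariance to take $c_1\ge 0$, the Libera--Z\l{}otkiewicz parametrization, and Lemma \ref{Y(ABC)} with the ensuing case analysis, ending with the same extremal $f_1$. Two small corrections: after using $|p_3|\le 1$ the quadratic handled by $Y(A,B,C)$ is in $p_2$ with \emph{real} coefficients $A,B,C$ depending only on $p_1$ (not a quadratic in $p_3$ with $p_2$-dependent coefficients, which would contradict linearity in $p_3$), and for the extremal function $\gamma_2=\lambda/2$ rather than $\lambda/4$, so $|H_{2,1}(F_{f_1}/2)|=|\lambda|^2/4$ and equality is indeed attained.
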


\begin{proof}
	Let $f(z)=z+\sum_{n=2}^{\infty}a_nz^n\in \mathcal{S}_{\beta}(\alpha)$. Then by the definition, we may consider  $p(z)=1+c_1z+c_2z^2+\cdots\in \mathcal{P}$ of the form 
	$$
	p(z)=\cfrac{1}{1-\alpha}\bigg\{ \cfrac{1}{\cos \beta}\bigg(e^{-i\beta} \cfrac{zf'(z)}{f(z)}+i\sin \beta\bigg)-\alpha\bigg\}.
	$$
	The above equality we can rewrite as 
$$
((1-\alpha)p(z)+\alpha)\cos \beta-i \sin \beta=e^{-i\beta}\cfrac{zf'(z)}{f(z)}.
$$
By using the Taylor series representations of the functions $f$ and $p$, and after comparing the coefficients of $z^n\, (n=1,2,3)$ on both the sides, we get
\begin{align*}
	a_2&=(1-\alpha)e^{i\beta}\cos\beta c_1,\\ \text
	2a_3&=(1-\alpha)^2e^{2i\beta}\cos^2\beta c_1^2+(1-\alpha)e^{i\beta}\cos\beta c_2,
\end{align*}
and
\begin{align*}
	6a_4&=(1-\alpha)^3e^{3i\beta}\cos^3\beta c_1^3+3c_1c_2(1-\alpha)^2e^{2i\beta}\cos^2\beta+2(1-\alpha)e^{i\beta}\cos\beta c_3.
\end{align*}
Substitution of $a_2$, $a_3$, and $a_4$ in \eqref{Hankel} gives
\begin{align*}
	H_{2,1}(F_f/2)=\cfrac{(1-\alpha)^2e^{2i\beta}\cos^2\beta}{48}~(4c_1c_3-3c_2^2).
\end{align*}
As $H_{2,1}(F_f/2)$ and $\mathcal{S}_{\beta}(\alpha)$ are invariant under the rotations,  therefore to simplify the calculation we assume that $c_1$ is real. Therefore, by Lemma \ref{Pclass}, for some $p_1\in [0,1]$ and $p_2,p_3\in \overline{\mathbb{D}}$ we have
\begin{align}\label{eqsba}
H_{2,1}(F_f/2)&=\cfrac{(1-\alpha)^2\cos^2 \beta}{12}\bigg(p_1^4+2(1-p_1^2)p_1^2p_2-(1-p_1^2)(3+p_1^2)p_2^2\\&\quad\quad+4p_1(1-p_1^2)(1-|p_2|^2)p_3\bigg)\nonumber.
\end{align}
Now, we may have the following cases on $p_1$:\\

\medskip\noindent
{\bf Case 1:} Let $p_1=1$. Then from \eqref{eqsba} we get
$$
|H_{2,1}(F_f/2)|=\cfrac{(1-\alpha)^2\cos^2 \beta}{12}.
$$

\medskip\noindent
{\bf Case 2:} Let $p_1=0$. Then from \eqref{eqsba} we get
$$
|H_{2,1}(F_f/2)|=\cfrac{(1-\alpha)^2\cos^2 \beta}{12}~|3p_2^2|\le \cfrac{(1-\alpha)^2\cos^2 \beta}{4}.
$$

\medskip\noindent
{\bf Case 3:} Let $p_1\in (0,1)$.
Applying the triangle inequality in \eqref{eqsba} and by using the fact that $|p_3|\le 1$, we obtain
\begin{align}\label{eq1}
	|H_{2,1}(F_f/2)|&\le\cfrac{(1-\alpha)^2\cos^2 \beta}{12}\bigg(\big|p_1^4+2(1-p_1^2)p_1^2p_2-(1-p_1^2)(3+p_1^2)p_2^2\big|\\&\quad\quad+4p_1(1-p_1^2)(1-|p_2|^2)\bigg)\nonumber\\
	&=\cfrac{(1-\alpha)^2\cos^2 \beta p_1(1-p_1^2)}{3}\bigg(\big|A+Bp_2+Cp_2^2\big|+1-|p_2|^2\bigg),
\end{align}
where
$$
A:=\cfrac{p_1^3}{4(1-p_1^2)},~B:=\cfrac{p_1}{2}, \text{ and } C:=-\cfrac{(3+p_1^2)}{4p_1}.
$$
Observe that $AC<0$, so we can apply case (ii) of Lemma \ref{Y(ABC)} and we obtain that the only condition $|AB|-|C|(|B|-4|A|)\le 0$ is satisfying for some $p_1$, as we can see from below observations.

{\bf 3(a)} Note that for $p_1\in (0,1)$ we have
$$
-4AC\biggl(\cfrac{1}{C^2}-1\biggr)-B^2=-\cfrac{3p_1^2}{p_1^2+3}\le 0.
$$
Also, the inequality $|B|<2(1-|C|)$ is equivalent to $2p_1^2-4p_1+3<0$ which is not true for $p_1\in (0,1)$. 

{\bf 3(b)} Next, it is easy to check that
$$
\min \biggl\{4(1+|C|)^2,-4AC\bigg(\cfrac{1}{C^2}-1\bigg)\biggr\}=-4AC\bigg(\cfrac{1}{C^2}-1\bigg)\le B^2,
$$
here the last inequality directly follows from 3(a).

{\bf 3(c)} For $0< p_1< 1$, it is easy to verify that $|C|(|B|+4|A|)-|AB|\le 0$ is not satisfied as $3+4p_1^2\ge 0$.

{\bf 3(d)} We note that the inequality
$$
|AB|-|C|(|B|-4|A|)=\cfrac{4p_1^4+8p_1^2-3}{8(1-p_1^2)}\le 0
$$
holds for $0<p_1\le s_1:=\sqrt{\sqrt{7}/2-1}\approx 0.568221$. 
It follows from Lemma \ref{Y(ABC)} and the inequality \eqref{eq1} that
\begin{align*}
|H_{2,1}(F_f/2)|&\le \cfrac{(1-\alpha)^2\cos^2 \beta p_1(1-p_1^2)}{3}(-|A|+|B|+|C|)\\
&=\cfrac{(1-\alpha)^2\cos^2 \beta(3-4p_1^4)}{12}\\
&\le \cfrac{(1-\alpha)^2\cos^2 \beta}{4}
\end{align*}
for $0<p_1\le s_1$.

{\bf 3(e)} For $s_1<p_1<1$, we use the last case of Lemma \ref{Y(ABC)} together with \eqref{eq1} to obtain
\begin{align*}
|H_{2,1}(F_f/2)|&\le \cfrac{(1-\alpha)^2\cos^2 \beta p_1(1-p_1^2)}{3}~(|C|+|A|)\sqrt{1-\cfrac{B^2}{4AC}}=t(p_1),
\end{align*}
where
$$
t(x):=\cfrac{(1-\alpha)^2\cos^2 \beta(3-2x^2)}{6\sqrt{3+x^2}}.
$$
Observe that 
$$
t'(x)=-\cfrac{(1-\alpha)^2\cos^2 \beta(15x+2x^3)}{6(3+x^2)^{3/2}}<0, \quad s_1<x<1,
$$
Thus, the function $t$ is decreasing on $s_1<x<1$ which yields 
$$
t(x)\le t(s_1)=(1-\alpha)^2\cos^2 \beta \cfrac{ 5-\sqrt{7}}{3\sqrt{8+2\sqrt{7}}}\le \cfrac{(1-\alpha)^2\cos^2 \beta}{4},
$$
for $s_1<x<1$.
Summarizing parts from Case 1-3, it follows the desired inequality \eqref{inequalityS}. \\[2mm]

We now proceed to prove the equality part. Consider the function
$$
f_1(z)=\cfrac{z}{(1-z^2)^{(1-\alpha)\cos \beta e^{i\beta}}}, \quad {z\in \mathbb{D}}.
$$
A simple calculation shows that $f_1$ belongs to $\mathcal{S}_{\beta}(\alpha)$. The coefficients of $f_1$ are $a_2=0$ and $a_3=(1-\alpha)\cos \beta e^{i\beta}$. Then from \eqref{Hankel} we see that the inequality \eqref{inequalityS} is sharp for $f_1$. 
This completes the proof.
\end{proof}

For the special case $\beta=0$, we get the following sharp result for the class of starlike functions of order $\alpha$:
\begin{corollary}
	Let $f \in \mathcal{S}^*(\alpha)$, $0 \le \alpha <1$. Then we have 
		$$
	|H_{2,1}(F_f/2)|\le \cfrac{(1-\alpha)^2}{4}.
	$$
	The inequality is sharp.
\end{corollary}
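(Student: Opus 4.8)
The plan is to obtain this corollary as a direct specialization of Theorem~\ref{a2a1} to the case $\beta = 0$. Recall that the paper has already identified $\mathcal{S}_0(\alpha)$ with $\mathcal{S}^*(\alpha)$: the defining requirement ${\rm Re}\big(e^{-i\beta} zf'(z)/f(z)\big) > \alpha \cos\beta$ reduces, when $\beta = 0$, to ${\rm Re}\big(zf'(z)/f(z)\big) > \alpha$, which is precisely the order-$\alpha$ starlikeness condition. Hence every $f \in \mathcal{S}^*(\alpha)$ is a member of $\mathcal{S}_{\beta}(\alpha)$ with $\beta = 0$, and we may invoke Theorem~\ref{a2a1}. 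Inequality~\eqref{inequalityS}, with $\cos^2\beta = 1$, then reads
$$
|H_{2,1}(F_f/2)| \le \frac{(1-\alpha)^2 \cos^2 0}{4} = \frac{(1-\alpha)^2}{4},
$$
which is exactly the asserted bound. So the entire content of the estimate is inherited from the theorem, and nothing new needs to be proved.

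For sharpness I would take the extremal function supplied by Theorem~\ref{a2a1} in the case $\beta = 0$, namely
$$
f_1(z) = \frac{z}{(1-z^2)^{1-\alpha}}, \qquad z \in \mathbb{D}.
$$
A short computation confirms $f_1 \in \mathcal{S}^*(\alpha)$: one finds $zf_1'(z)/f_1(z) = \alpha + (1-\alpha)\frac{1+z^2}{1-z^2}$, and since $w \mapsto \frac{1+w}{1-w}$ maps $\mathbb{D}$ into the right half-plane (and $z \mapsto z^2$ maps $\mathbb{D}$ into $\mathbb{D}$), we get ${\rm Re}\big(zf_1'(z)/f_1(z)\big) > \alpha$. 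Expanding, $f_1(z) = z + (1-\alpha)z^3 + \cdots$, so that $a_2 = 0$ and $a_3 = 1-\alpha$; substituting into~\eqref{Hankel} gives $|H_{2,1}(F_{f_1}/2)| = \tfrac14|{-a_3^2 + 0}| = \tfrac14(1-\alpha)^2$, so equality is attained.

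There is no genuine obstacle in this corollary: it is a pure specialization, and the only things to check are the elementary verification that $f_1$ satisfies the starlikeness-of-order-$\alpha$ condition and the equally routine coefficient computation for $f_1$ used to confirm sharpness. Both of these are handled by the short calculations indicated above, so the proof is essentially one line together with the explicit extremal function.
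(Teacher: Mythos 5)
Your proposal is correct and matches the paper's approach: the paper obtains this corollary purely as the specialization $\beta=0$ of Theorem \ref{a2a1}, with the same extremal function $f_1(z)=z/(1-z^2)^{1-\alpha}$ giving $a_2=0$, $a_3=1-\alpha$ and hence equality in \eqref{Hankel}. Your added verifications of starlikeness of order $\alpha$ and of the coefficient computation are accurate and only make explicit what the paper leaves implicit.
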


For $\alpha=0$ and $\beta=0$, we obtain the estimate for the class $\mathcal{S}^*$ of starlike function.
\begin{corollary}
Let $f \in \mathcal{S}^*$. Then we have 
$$
|H_{2,1}(F_f/2)|\le \cfrac{1}{4}.
$$
The equality holds for the rotation of the Koebe function.
\end{corollary}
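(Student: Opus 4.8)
The plan is to read off this corollary as the special case $\alpha=0$, $\beta=0$ of Theorem~\ref{a2a1} (equivalently, as the case $\alpha=0$ of the preceding corollary), so essentially no new work is required. First I would note that, by definition, $\mathcal{S}^{*}=\mathcal{S}^{*}(0)=\mathcal{S}_{0}(0)$, so every $f\in\mathcal{S}^{*}$ belongs to $\mathcal{S}_{\beta}(\alpha)$ with $\alpha=\beta=0$. Substituting these values into \eqref{inequalityS} immediately gives
$$
|H_{2,1}(F_{f}/2)|\le\cfrac{(1-0)^{2}\cos^{2}0}{4}=\cfrac{1}{4},
$$
which is the claimed bound; the entire content already sits inside the three-case analysis carried out in the proof of Theorem~\ref{a2a1}.

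For the equality statement I would track the extremal function of Theorem~\ref{a2a1} under the same specialization. Setting $\alpha=\beta=0$ in $f_{1}(z)=z/(1-z^{2})^{(1-\alpha)\cos\beta\,e^{i\beta}}$ yields $f_{1}(z)=z/(1-z^{2})=z+z^{3}+z^{5}+\cdots$, so $a_{2}=0$, $a_{3}=1$, $a_{4}=0$, and then \eqref{Hankel} gives $H_{2,1}(F_{f_{1}}/2)=\tfrac14(0-1+0)=-\tfrac14$; hence $|H_{2,1}(F_{f_{1}}/2)|=\tfrac14$ and the bound is attained. I would also record that $f_{1}\in\mathcal{S}^{*}$, which follows from $zf_{1}'(z)/f_{1}(z)=(1+z^{2})/(1-z^{2})$ having positive real part on $\mathbb{D}$.

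The one point that requires care, and which I expect to be the only (minor) obstacle, is the precise description of the extremal functions. A rotation of the Koebe function does \emph{not} achieve $1/4$: for $k(z)=z/(1-z)^{2}$ one has $a_{2}=2$, $a_{3}=3$, $a_{4}=4$, so \eqref{Hankel} gives $|H_{2,1}(F_{k}/2)|=\tfrac14\big|8-9+\tfrac43\big|=\tfrac1{12}$; in fact the Koebe function is exactly the $p_{1}=1$ extremal of Case~1, where the value is $(1-\alpha)^{2}\cos^{2}\beta/12$. The correct extremal is $f_{1}(z)=z/(1-z^{2})$ (or a rotation of it), corresponding to $p_{1}=0$ and $|p_{2}|=1$ in Case~2. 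So in writing the proof I would state the equality case as ``a rotation of $z/(1-z^{2})$'' and, if the uniqueness assertion is wanted, confirm it by tracing the equality conditions of Lemma~\ref{Y(ABC)} through Cases~2 and~3 of the proof of Theorem~\ref{a2a1}.
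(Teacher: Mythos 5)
Your proposal is correct and follows the paper's own route: the corollary is obtained simply by setting $\alpha=\beta=0$ in Theorem~\ref{a2a1}, and the paper gives no separate argument. Your extra remark about the equality case is also justified: the Koebe function $k(z)=z/(1-z)^2$ has $\gamma_n=1/n$, so $|H_{2,1}(F_k/2)|=|1/3-1/4|=1/12$, not $1/4$, whereas the bound $1/4$ is attained by the theorem's extremal $f_1(z)=z/(1-z^2)$ and its rotations, which are starlike since ${\rm Re}\big(zf_1'(z)/f_1(z)\big)={\rm Re}\big((1+z^2)/(1-z^2)\big)>0$. Thus the corollary's description of the equality case as ``the rotation of the Koebe function'' is inaccurate as stated, and your correction is consistent with the extremal function exhibited in the proof of Theorem~\ref{a2a1}.
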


\subsection{The class ${\mathcal G}(\gamma)$}
\medskip\noindent

Recall that a function $f \in \mathcal{A}$ is said to be {\em locally univalent function} at a point $z\in \mathbb{D}$ if it is univalent in some neighborhood of $z$; equivalently $f'(z)\ne 0$.
Let $\mathcal{LU}$ denote the subclass of $\mathcal{A}$ consisting of all locally univalent functions;
namely, $\mathcal{LU}=\{f\in\mathcal{A}:f'(z)\ne0,z\in\mathbb{D}\}$. A family $\mathcal{G}(\nu)$, $\nu>0$, of functions $f\in \mathcal{LU}$ is defined by
$$
{\mathcal G}(\nu)=\left\{f\in \mathcal{LU}:{\rm Re } \left ( 1+\frac{zf''(z)}{f'(z)}\right )<1+\cfrac{\nu}{2}\right\}.
$$
The class $\mathcal{G}:=\mathcal{G}(1)$ was first introduced by Ozaki \cite{Ozaki41} and proved the inclusion relation $\mathcal{G}\subset \mathcal{S}$.
The Taylor coefficient problem for the class $\mathcal{G}(\nu)$, $0<\nu\leq 1$, is discussed in \cite{Obradovic13}. Recently, the radius of convexity for functions in the class $\mathcal{G}(\nu)$, $\nu>0$, is obtained in \cite{Shankey20}. The class $\mathcal{G}(\nu)$, with special choices of the parameter $\nu$, has also been considered by many researchers in the literature for different purposes; see for instance \cite{ponnusamy96,ponnusamy07,Shankey20}. \\[2mm]

Next, we obtained the following sharp bound of  $|H_{2,1}(F_f/2)|$ for $f\in \mathcal{G}(\nu)$.
\begin{theorem}\label{glambda}
	Let $0<\nu\le 1$. If $f\in \mathcal{G}(\nu)$ given by \eqref{S}, then
	$$
	|H_{2,1}(F_f/2)|\le \cfrac{\nu^2(\nu^2+12\nu-44)}{192(\nu^2+8\nu-32)}.
	$$
	The inequality is sharp. 
\end{theorem}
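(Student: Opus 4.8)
The plan is to follow the scheme used for Theorem~\ref{a2a1}. Since the defining inequality $\mathrm{Re}\,(1+zf''(z)/f'(z))<1+\nu/2$ is equivalent to $\mathrm{Re}\,\bigl(1-\tfrac{2}{\nu}\,zf''(z)/f'(z)\bigr)>0$, for $f\in\mathcal{G}(\nu)$ there is a Carath\'eodory function $p(z)=1+c_1z+c_2z^2+\cdots\in\mathcal{P}$ with
\[
\frac{zf''(z)}{f'(z)}=-\frac{\nu}{2}\bigl(p(z)-1\bigr).
\]
Writing $f'(z)=1+2a_2z+3a_3z^2+4a_4z^3+\cdots$ and comparing the coefficients of $z,z^2,z^3$ on both sides gives
\[
a_2=-\tfrac{\nu}{4}\,c_1,\qquad a_3=\tfrac{\nu^2}{24}\,c_1^2-\tfrac{\nu}{12}\,c_2,\qquad a_4=-\tfrac{\nu^3}{192}\,c_1^3+\tfrac{\nu^2}{32}\,c_1c_2-\tfrac{\nu}{24}\,c_3 .
\]
Substituting these into \eqref{Hankel} and simplifying, I expect the identity
\[
H_{2,1}(F_f/2)=\frac{\nu^2}{36864}\bigl(96\,c_1c_3-64\,c_2^2-8\nu\,c_1^2c_2-\nu^2c_1^4\bigr),
\]
which plays the role that $H_{2,1}(F_f/2)=\tfrac{(1-\alpha)^2e^{2i\beta}\cos^2\beta}{48}(4c_1c_3-3c_2^2)$ plays in the proof of Theorem~\ref{a2a1}.

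Both $\mathcal{G}(\nu)$ and $H_{2,1}(F_f/2)$ are invariant under the rotations $f\mapsto e^{-i\theta}f(e^{i\theta}z)$, so I may assume $c_1\ge0$ and invoke Lemma~\ref{Pclass}. Substituting \eqref{c2}--\eqref{c3} (with $p_1\in[0,1]$, $p_2,p_3\in\overline{\mathbb{D}}$) and collecting terms yields
\[
H_{2,1}(F_f/2)=\frac{\nu^2}{2304}\Bigl[(8-4\nu-\nu^2)p_1^4+4(4-\nu)p_1^2(1-p_1^2)p_2-8(1-p_1^2)(p_1^2+2)p_2^2+24p_1(1-p_1^2)(1-|p_2|^2)p_3\Bigr],
\]
where $8-4\nu-\nu^2>0$ on $(0,1]$, a fact used repeatedly. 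For $p_1=1$ and $p_1=0$ only a single term survives, and direct estimates give $\nu^2(8-4\nu-\nu^2)/2304$ and $\nu^2/144$ respectively; both lie below the asserted bound (the second inequality reducing to $(2-\nu)^2\ge0$).

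For $p_1\in(0,1)$, the triangle inequality together with $|p_3|\le1$ gives
\[
|H_{2,1}(F_f/2)|\le\frac{\nu^2 p_1(1-p_1^2)}{96}\bigl(|A+Bp_2+Cp_2^2|+1-|p_2|^2\bigr),
\]
with $A=\dfrac{(8-4\nu-\nu^2)p_1^3}{24(1-p_1^2)}$, $B=\dfrac{(4-\nu)p_1}{6}$, $C=-\dfrac{p_1^2+2}{3p_1}$, so that $AC<0$ and Lemma~\ref{Y(ABC)}(ii) applies. As in steps 3(a)--3(e) of the proof of Theorem~\ref{a2a1}, the hypotheses of Lemma~\ref{Y(ABC)}(ii) reduce to elementary polynomial inequalities in $p_1$ (now carrying the parameter $\nu$): one finds that for $p_1$ up to a threshold $s(\nu)$ the active branch of $R(A,B,C)$ is $-|A|+|B|+|C|$, which gives the bound $\tfrac{\nu^2}{2304}\bigl((\nu^2+8\nu-32)p_1^4+(8-4\nu)p_1^2+16\bigr)$, while for $p_1>s(\nu)$ the active branch is $(|A|+|C|)\sqrt{1-B^2/(4AC)}$, which turns out to be a decreasing function of $p_1$.

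The feature distinguishing this from Theorem~\ref{a2a1} is that the maximum is attained at an \emph{interior} point. The concave quadratic $(\nu^2+8\nu-32)u^2+(8-4\nu)u+16$ in $u=p_1^2$ attains its maximum $\tfrac{12(44-12\nu-\nu^2)}{32-8\nu-\nu^2}$ at $u^{\ast}=\tfrac{2(2-\nu)}{32-8\nu-\nu^2}\in(0,1)$, and multiplying by $\nu^2/2304$ produces exactly $\tfrac{\nu^2(\nu^2+12\nu-44)}{192(\nu^2+8\nu-32)}$. Completing the proof then amounts to checking that $u^{\ast}\le s(\nu)^2$ (so this interior maximum really lies in the range where the $-|A|+|B|+|C|$ branch is active) and that on $[s(\nu),1)$ the square-root branch stays below this value. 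For sharpness, $p_1=\sqrt{u^{\ast}}$ together with a suitable $p_2\in\mathbb{T}$ determines, via Lemma~\ref{Pclass}, an explicit $p\in\mathcal{P}$; the extremal $f\in\mathcal{G}(\nu)$ is recovered by integrating $zf''(z)/f'(z)=-\tfrac{\nu}{2}(p(z)-1)$, and one checks that equality holds in \eqref{Hankel} for it. The main obstacle is thus not a single hard estimate but the careful bookkeeping in the branch analysis of Lemma~\ref{Y(ABC)}(ii), keeping track of how the thresholds vary as $\nu$ ranges over $(0,1]$, together with the observation that the extremum is interior.
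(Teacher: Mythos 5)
Your proposal follows essentially the same route as the paper's proof: the same Carath\'eodory reduction giving $H_{2,1}(F_f/2)=\tfrac{\nu^2}{36864}\bigl(96c_1c_3-64c_2^2-8\nu c_1^2c_2-\nu^2c_1^4\bigr)$, the same Libera--Z\l{}otkiewicz parametrization from Lemma \ref{Pclass}, the same choice of $A,B,C$ with Lemma \ref{Y(ABC)}(ii), the same interior maximum at $u^{\ast}=\tfrac{2(2-\nu)}{32-8\nu-\nu^2}$ producing the stated bound, and the same type of extremal function (the paper takes $p(z)=(1-z^2)/(1-2s_2z+z^2)$, i.e.\ your $p_2=-1$). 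The verifications you defer --- ruling out the other branches of Lemma \ref{Y(ABC)}(ii), checking $u^{\ast}\le s(\nu)^2$, and the monotonicity of the square-root branch together with its matching value at the threshold --- are exactly the paper's sub-cases 3(a)--3(e), and they go through as you anticipate.
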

\begin{proof}
	Since $f\in \mathcal{G}(\nu)$, then there exist a Carath\'{e}odory function $p$ of the form 
	$$
	p(z)=\cfrac{1}{\nu} \bigg(\nu- \cfrac{2zf''(z)}{f'(z)}\bigg).
	$$ 
	It is equivalent to write 
	\begin{equation}\label{eqg}
		\nu(p(z)-1)f'(z)=-2zf''(z).
	\end{equation}
	By using the Taylor series representations for functions $f$ and $p$ and equating the coefficients of $z$, $z^2$, and $z^3$ in \eqref{eqg}, we obtain
	\begin{equation*}
		a_2=\cfrac{\nu c_1}{4}, a_3=\cfrac{\nu(\nu c_1^2-2c_2)}{24}~, \text{ and } a_4=\cfrac{\nu(6\nu c_1c_2-8c_3-\nu^2 c_1^2)}{192}.
	\end{equation*}
	By substituting the above expression for $a_2,~a_3$, and $a_4$ in \eqref{Hankel} and then further simplification gives
	\begin{align*}
		H_{2,1}(F_f/2)&=\cfrac{1}{4}\bigg(a_2a_4-a_3^2+\cfrac{1}{12}\,a_2^4\bigg)\\
		&=\cfrac{\nu^2}{36864}~\big(96c_1c_3-64c_2^2-8\nu c_1^2 c_2-\nu^2 c_1^4\big).
	\end{align*}
Noting that $\mathcal{G}(\nu)$ and $H_{2,1}(F_f/2)$ are rotationally invariant. So we can assume that $c_1$ is real. Thus, in view of Lemma \ref{Pclass} and writing $c_1$, $c_2$, and $c_3$ in terms of $p_1$, $p_2$, and $p_3$ we obtain
	\begin{align}\label{hankelg}
		H_{2,1}(F_f/2)&=\cfrac{\nu^2}{2304}\bigg((-\nu^2-4\nu+8)p_1^4+4(4-\nu)(1-p_1^2)p_1^2p_2\\
		&\quad\quad\quad-8(2+p_1^2)(1-p_1^2)p_2^2+24(1-p_1^2)(1-|p_2|^2)p_1p_3\bigg)\nonumber
	\end{align}
	with $p_1\in [0,1]$ and $p_2,p_3\in \overline{\mathbb{D}}$.
	
		We next divide the proof into three cases:
		
	\medskip\noindent
	{\bf Case 1:} If $p_1=1$. Then from \eqref{hankelg}, we obtain
	$$
	|H_{2,1}(F_f/2)|=\cfrac{\nu^2(-\nu^2-4\nu+8)}{2304}.
	$$
	
	\medskip\noindent
	{\bf Case 2:} If $p_1=0$. Then from \eqref{hankelg}, we obtain
	$$
	|H_{2,1}(F_f/2)|=\cfrac{16\nu^2|p_2|^2}{2304}\le \cfrac{\nu^2}{144}.
	$$
	
	\medskip\noindent
	{\bf Case 3:} Now let $p_1\in (0,1)$. Then use $|p_3|\le 1$ in \eqref{hankelg} to obtain
	\begin{align*}
		&|H_{2,1}(F_f/2)|\\&\le \cfrac{24\nu^2p_1(1-p_1^2)}{2304}\bigg(\bigg|\cfrac{p_1^3(-\nu^2-4\nu+8)}{24(1-p_1^2)}+\cfrac{(4-\nu)p_1p_2}{6}-\cfrac{(2+p_1^2)p_2^2}{3p_1}\bigg|+1-|p_2|^2\bigg)\\
		&=\cfrac{24\nu^2p_1(1-p_1^2)}{2304}\big(|A+Bp_2+Cp_2^2|+1-|p_2|^2\big),
	\end{align*}
	where
	\begin{equation*}
		A:=\cfrac{p_1^3(-\nu^2-4\nu+8)}{24(1-p_1^2)},~B:=\cfrac{(4-\nu)p_1}{6}, \text{ and }C:=-\cfrac{(2+p_1^2)}{3p_1}.
	\end{equation*}
	Since $AC<0$, then we apply Lemma \ref{Y(ABC)} only for the case (ii) and we obtain that only $|AB|-|C|(|B|-4|A|)\le 0$ is satisfying for some $p_1$, as we can see from below observations.
	
	{\bf 3(a)} Note that
	$$
	-4AC\biggl(\cfrac{1}{C^2}-1\biggr)-B^2=\cfrac{p_1^2\big(-\nu^2 p_1^2+(2\nu^2+16\nu-32)\big)}{12(p_1^2+2)}\le 0,
	$$
	for $0<p_1<1$ and $0<\nu \le 1$. Moreover, it is easy to see that the inequality $|B|<2(1-|C|)$ does not hold for $p_1\in (0,1)$. 
	
	{\bf3(b)} Using the above observation, we have the following inequality 
	$$
	\min \biggl\{4(1+|C|)^2,-4AC\bigg(\cfrac{1}{C^2}-1\bigg)\biggr\}=-4AC\bigg(\cfrac{1}{C^2}-1\bigg)\le B^2.
	$$
	
	{\bf3(c)} We now show that $|C|(|B|+4|A|)-|AB|> 0$ holds for all $\nu\in(0,1]$ and $p_1\in (0,1)$. A simple calculation shows that
	$$
	|C|(|B|+4|A|)-|AB|=\cfrac{-p_1^4(8+\nu)\nu^2+8p_1^2(-\nu^2-4\nu+8)+16(4-\nu)}{144(1-p_1^2)}:=g(\nu).
	$$
	It is easily check that $g$ is a decreasing function with respect to $\nu$ in $(0,1]$. This implies that
	$$
	g(\nu)\ge g(1)=\cfrac{16+8p_1^2-3p_1^4}{48(1-p_1^2)}\ge 0.
	$$

	{\bf 3(d)} Next, the inequality
	\begin{align*}
		|AB|- |C|(|B|-4|A|)&=\cfrac{p_1^4(\nu^3-8\nu^2-64\nu+128)+8p_1^2(-2\nu^2-9\nu+20)-16(4-\nu)}{144(1-p_1^2)}\\&\le 0
	\end{align*}
	is equivalent to
	$$
	G(x^2)\le 0, \quad \nu\in(0,1] \text{ and } x\in (0,1)
	$$
	where
	$$
	G(x):=x^2(\nu^3-8\nu^2-64\nu+128)+8x(-2\nu^2-9\nu+20)-16(4-\nu) \text{ and } x=p_1^2.
	$$ 
	which is a quadratic polynomial. Note that the discriminant of $G$ is given by $\Delta=192 (304 - 248 \nu + 11 \nu^2 + 16 \nu^3 +\nu^4)>0$ for $\nu
	\in(0,1]$. The equation $G(x)=0$ has following two solutions:
	$$
	x_{1}:=\cfrac{-4(-2\nu^2-9\nu+20)-4 \sqrt{3(304-248\nu+11\nu^2+16\nu^3+\nu^4)}}{\nu^3-8\nu^2-64\nu+128}
	$$
	and
	$$
	x_{2}:=\cfrac{-4(-2\nu^2-9\nu+20)+ 4 \sqrt{3(304-248\nu+11\nu^2+16\nu^3+\nu^4)}}{\nu^3-8\nu^2-64\nu+128}.
	$$
	Check that $x_1<0$ and $x_2>0$ as $892 - 735\nu + 35\nu^2 + 48 \nu^3 + 3 \nu^4>0$. Also it is easy to verify that
	$x_2<1$ since $-28672 + 29696 \nu - 2816\nu^2 - 2848\nu^3 - 8 \nu^4 + 32\nu^5 - \nu^6<1$. Therefore, the function $G(x)$ has the unique zero $x_2\in (0,1)$.
	Hence $G\le0$ for $0<x\le x_2$  and the condition $|AB|\le |C|(|B|-4|A|)$ in Lemma \ref{Y(ABC)} is satisfied for $0<p_1\le \sqrt{x_2}$. Therefore, Lemma \ref{Y(ABC)} yields
	\begin{align}\label{pjieq}
		|H_{2,1}(F_f/2)|&\le \cfrac{24\nu^2p_1(1-p_1^2)}{2304}(-|A|+|B|+|C|)\nonumber \\&=\cfrac{\nu^2}{2304}~\bigg( (\nu^2+8\nu-32)p_1^4+(8-4\nu)p_1^2+16\bigg)=:\phi(p_1)\\
		&\le \phi(s_2)=\cfrac{\nu^2(\nu^2+12\nu-44)}{192(\nu^2+8\nu-32)}\nonumber
	\end{align}
	where 
	\begin{equation}\label{p0}
		s_2:=\sqrt{\cfrac{2(\nu-2)}{\nu^2+8\nu-32}}
	\end{equation}
	is the critical point of $\phi$ and gives the maximum value. A more involved computation shows that $s_2<\sqrt{x_2}$.
	
	{\bf 3(e)} Next consider the case $\sqrt{x_2}\le p_1<1$ and use the last case of the Lemma \ref{Y(ABC)}
	\begin{align}\label{eqbound}
		|H_{2,1}(F_f/2)|&\le \cfrac{24\nu^2p_1(1-p_1^2)}{2304}	(|A|+|C|)\sqrt{1-\cfrac{B^2}{4AC}}\nonumber \\&=\cfrac{\nu^2}{2304}~\bigg( 16-8p_1^2-p_1^4\nu(\nu+4)\bigg)\sqrt{\cfrac{3(p_1^2\nu^2+\nu^2+8\nu-16)}{2(2+p_1^2)(\nu^2+4\nu-8)}}=:\psi(p_1).
	\end{align}
Since
	\begin{align*}
		\psi'(x)&=\cfrac{-\nu^2x}{2304(2+x^2)^2(\nu^2+4\nu-8)}~\sqrt{\cfrac{3(2+x^2)(\nu^2+4\nu-8)}{2(x^2\nu^2+\nu^2+8\nu-16)}}\times\bigg( 16  (-48 + 24 \nu + \nu^2) \\&\quad+	8 x^2 (-16 - 56 \nu + 23 \nu^2 + 12 \nu^3 + \nu^4)+ 
		x^4 \nu (-192 + 64 \nu + 76 \nu^2 + 13 \nu^3) + 
	\\
	&\quad+4 x^6 \nu^3 (4 + \nu)\bigg)\\ 
		&<0,
	\end{align*}
	so  $\psi$ is decreasing in the interval $[\sqrt{x_2}, 1)$. Therefore $\psi(p_1)\le \psi(\sqrt{x_2})$ and equation \eqref{eqbound} leads to
	\begin{align*}
		|H_{2,1}(F_f/2)|&\le \psi(\sqrt{x_2})=\cfrac{-16\sqrt{3}\nu^2(\nu^2+4\nu-8)(a+b\sqrt{c})}{2304d^2}\sqrt{\cfrac{e+4\nu^2\sqrt{c}}{f+2(\nu^2+4\nu-8)\sqrt{c}}},
	\end{align*}
	where
	\begin{align*}
		a&:=3\nu^4+54\nu^3+18\nu^2-984\nu+1344\\
		b&:=2\nu^2+10\nu-16\\
		c&:=3(304-248\nu+11\nu^2+16\nu^3+\nu^4)\\
		d&:=128-64\nu-8\nu^2+\nu^3\\
		e&:=-2048+2048\nu-336\nu^2-108\nu^3+8\nu^4+\nu^5\\
		f&:=(\nu^3-4\nu^2-46\nu+88)(\nu^2+4\nu-8).
	\end{align*}
	A lengthy calculation shows that 
	$$
	\psi(\sqrt{x_2})=\phi(\sqrt{x_2})
	$$
	and by using \eqref{pjieq} we deduce that
	$$
	|H_{2,1}(F_f/2)|\le \phi(\sqrt{x_2})\le \phi(s_2)=\cfrac{\nu^2(\nu^2+12\nu-44)}{192(\nu^2+8\nu-32)}.
	$$
	Thus combining all the above cases 1-3, we find the desired inequality.\\[2mm]
	
	To prove the equality part, consider the function
	$$
	p_2(z)=\cfrac{1-z^2}{1-2s_2z+z^2} 
	$$
	is in the class $\mathcal{P}$ follows from Lemma \ref{P}. Here $s_2$ is defined by \eqref{p0}. For given $p_2\in \mathcal{P}$, we recall from \eqref{eqg} that the function $f_{2}\in \mathcal{G}(\nu)$
	with 
	$$
	a_2=-\cfrac{\nu s_2}{2},~a_3=\cfrac{\nu(1+(\nu-2)s_2^2)}{6}, \text{ and } a_4=-\cfrac{\nu(\nu-2)s_2(3+(\nu-4)s_2^2)}{24}.
	$$
	Hence 
	$$
	|H_{2,1}(F_f/2)|= \cfrac{\nu^2(\nu^2+12\nu-44)}{192(\nu^2+8\nu-32)}.
	$$
	This completes the proof of Theorem \ref{glambda}.
\end{proof}

\subsection{The class $\mathcal{F}_0 (\lambda)$}
\medskip\noindent

Let $f\in\mathcal{A}$ be a locally univalent. Then, according to Kaplan's theorem, it follows that
$f$ is {\em close-to-convex} if, and only if,
$$
\int_{\theta_1}^{\theta_2} {\rm Re}\left(1+\frac{zf''(z)}{f'(z)}\right)\,d\theta>-\pi,\quad z=re^{i\theta},
$$
for each $r~(0<r<1)$
and for each pair of real numbers $\theta_1$ and $\theta_2$ with $\theta_1<\theta_2$. If a locally univalent analytic function $f$ defined in $\mathbb{D}$ satisfies
$$
{\rm Re}\left(1+\frac{zf''(z)}{f'(z)}\right)>-\frac{1}{2},  ~\mbox{ for $z \in \mathbb{D}$},
$$
then by the Kaplan characterization it follows easily that $f$ is close-to-convex in $\mathbb{D}$,
and hence $f$ is univalent in $\mathbb{D}$.
This generates the following subclass of the class of close-to-convex (univalent) functions:
$$
\mathcal{C} (-1/2):=\left\{f\in \mathcal{A}:\,{\rm Re}\left (1+ \frac{zf''(z)}{f'(z)}\right )>-\frac{1}{2}  ~\mbox{ for $z \in \mathbb{D}$}\right\}.
$$
Functions in $\mathcal{C} (-1/2)$ are not necessarily starlike but is convex in some
direction. Other related results for $f\in \mathcal{C} (-1/2)$ were also obtained in \cite{AS17, PSY14}.
Robertson \cite{Robert36} considered the following generalization of $\mathcal{C} (-1/2)$ for $-1/2<\lambda\le 1/2$. The class $\mathcal{F}(\lambda)$, defined for $-1/2< \lambda\le 1$ by 
\begin{equation*}
\mathcal{F}(\lambda)=\left\{f\in \mathcal{A}:\,	{\rm Re}\left(1+\frac{zf''(z)}{f'(z)}\right)>\frac{1}{2} -\lambda ~\mbox{ for $z \in \mathbb{D}$}\right\}.
\end{equation*}
 We note that $\mathcal{F}(1)=\mathcal{C}(-1/2)$. Moreover, $\mathcal{F}(1/2)=:\mathcal{C}$ is the usual class of convex functions. Functions in $\mathcal{F}(\lambda)$  are close-to-convex for $1/2\le \lambda\le 1$ but contain non-starlike functions for all $1/2< \lambda\le 1$ (see \cite{PRU76}). 
The class $\mathcal{F}(\lambda)$ has also been considered for the restriction $1/2\le \lambda\le 1$, denote by $\mathcal{F}_0 (\lambda)$, and further extensively studied in this regards, we refer to \cite{ALT}. 

In the next theorem, we will discuss about the sharp bound for $|H_{2,1}(F_f/2)|$ when the functions $f$ runs over the class $ \mathcal{F}_0 (\lambda)$.

\begin{theorem}
Let $f\in \mathcal{F}_0 (\lambda)$, for $1/2\le \lambda\le 1$, given by \eqref{S}. Then 
$$
|H_{2,1}(F_f/2)|\le \cfrac{(2\lambda+1)^2(12\lambda^2-60\lambda-165)}{576(4\lambda^2-12\lambda-39)}.
$$	
The inequality is sharp. 
\end{theorem}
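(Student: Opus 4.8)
The plan is to run the argument of Theorem~\ref{glambda} with the parameter suitably transformed. Put $\mu:=\lambda+\tfrac12$, so $\mu\in[1,\tfrac32]$ and $2\mu=2\lambda+1$. If $f\in\mathcal{F}_0(\lambda)$, there is a Carath\'eodory function $p(z)=1+c_1z+c_2z^2+\cdots\in\mathcal{P}$ with
$$
p(z)=\frac{1}{\mu}\Big(\mu+\frac{zf''(z)}{f'(z)}\Big),\qquad\text{equivalently}\qquad zf''(z)=\mu\,(p(z)-1)\,f'(z),
$$
which is formally the relation defining $\mathcal{G}(\nu)$ with $\nu=-2\mu=-(2\lambda+1)$. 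Comparing the coefficients of $z,z^2,z^3$ yields $a_2,a_3,a_4$ as explicit polynomials in $c_1,c_2,c_3$ and $\mu$, and substituting them into \eqref{Hankel} I obtain the same expression as in the proof of Theorem~\ref{glambda} after the replacement $\nu\mapsto-(2\lambda+1)$, namely
$$
H_{2,1}(F_f/2)=\frac{\mu^2}{2304}\bigl(24c_1c_3+4\mu\,c_1^2c_2-16c_2^2-\mu^2c_1^4\bigr).
$$
Using rotational invariance I may take $c_1\in[0,1]$; Lemma~\ref{Pclass} then rewrites this as an identity of the shape \eqref{hankelg} in $p_1\in[0,1]$ and $p_2,p_3\in\overline{\mathbb{D}}$, the coefficient of $p_1^4$ being $-\nu^2-4\nu+8=11+4\lambda-4\lambda^2$ and that of $(1-p_1^2)p_1^2p_2$ being $4(4-\nu)=8\lambda+20$.

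Next I would split into the cases $p_1=1$, $p_1=0$ and $p_1\in(0,1)$. The first two are immediate and give $|H_{2,1}(F_f/2)|=\tfrac{(2\lambda+1)^2(11+4\lambda-4\lambda^2)}{2304}$ and $\le\tfrac{(2\lambda+1)^2}{144}$, both strictly below the asserted bound for $\tfrac12\le\lambda\le1$. For $p_1\in(0,1)$, the triangle inequality and $|p_3|\le1$ reduce the problem to estimating $|A+Bp_2+Cp_2^2|+1-|p_2|^2$ with
$$
A=\frac{(11+4\lambda-4\lambda^2)\,p_1^3}{24(1-p_1^2)},\qquad B=\frac{(2\lambda+5)\,p_1}{6},\qquad C=-\frac{2+p_1^2}{3p_1}.
$$
Since $11+4\lambda-4\lambda^2>0$ on $[\tfrac12,1]$ one has $A>0>C$, so $AC<0$ and Lemma~\ref{Y(ABC)}(ii) applies. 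As in Theorem~\ref{glambda}, $|C|\ge1$ on $(0,1)$ rules out $|B|<2(1-|C|)$, and a short computation gives $-4AC(C^{-2}-1)-B^2\le0$ for $\tfrac12\le\lambda\le1$; hence the first two alternatives of Lemma~\ref{Y(ABC)}(ii) are excluded and $Y(A,B,C)=R(A,B,C)$.

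Inside $R(A,B,C)$ I would check that $|C|(|B|+4|A|)-|AB|>0$ on all of $(0,1)$ (which reduces to a downward quadratic in $p_1^2$ being positive at $p_1^2=0$ and $p_1^2=1$ for every $\lambda\in[\tfrac12,1]$), so the first branch never occurs; and that $|AB|\le|C|(|B|-4|A|)$ is equivalent to a quadratic inequality $G(p_1^2)\le0$ holding precisely for $p_1\in(0,\sqrt{x_2}\,]$, where $x_2\in(0,1)$ is the positive root of that quadratic (its discriminant being positive on $[\tfrac12,1]$). On $(0,\sqrt{x_2}\,]$ Lemma~\ref{Y(ABC)} gives
$$
|H_{2,1}(F_f/2)|\le\frac{(2\lambda+1)^2p_1(1-p_1^2)}{96}\bigl(-|A|+|B|+|C|\bigr)=\frac{(2\lambda+1)^2}{2304}\bigl((4\lambda^2-12\lambda-39)p_1^4+(8\lambda+12)p_1^2+16\bigr)=:\phi(p_1),
$$
a downward quadratic in $p_1^2$ maximized at $s_3:=\sqrt{\tfrac{2(2\lambda+3)}{39+12\lambda-4\lambda^2}}\in(0,1)$, with $\phi(s_3)=\tfrac{(2\lambda+1)^2(12\lambda^2-60\lambda-165)}{576(4\lambda^2-12\lambda-39)}$; one also verifies $s_3<\sqrt{x_2}$. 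On $[\sqrt{x_2},1)$ Lemma~\ref{Y(ABC)} gives $|H_{2,1}(F_f/2)|\le\tfrac{(2\lambda+1)^2p_1(1-p_1^2)}{96}(|A|+|C|)\sqrt{1-\tfrac{B^2}{4AC}}=:\psi(p_1)$; differentiating shows $\psi'<0$ there, while $\psi(\sqrt{x_2})=\phi(\sqrt{x_2})$ because the two relevant branches of $R(A,B,C)$ agree along $|AB|=|C|(|B|-4|A|)$. Hence $\psi(p_1)\le\phi(\sqrt{x_2})\le\phi(s_3)$, and collecting all the cases gives the asserted inequality.

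For the sharpness I would take $p(z)=\dfrac{1-z^2}{1-2s_3z+z^2}\in\mathcal{P}$ (Lemma~\ref{Pclass}, with $p_1=s_3$ and $p_2\in\mathbb{T}$), recover the associated function $f_3\in\mathcal{F}_0(\lambda)$ from $zf''=\mu(p-1)f'$ together with the induced coefficients $a_2,a_3,a_4$, and check that $|H_{2,1}(F_{f_3}/2)|$ equals the right-hand side. The main obstacle is not conceptual but computational: the monotonicity of $\psi$ on $[\sqrt{x_2},1)$, the identity $\psi(\sqrt{x_2})=\phi(\sqrt{x_2})$, and the inequalities $x_2\in(0,1)$ and $s_3<\sqrt{x_2}$ all reduce to sign analysis of high-degree polynomials in $\lambda$, which must be redone here because the relevant range is $\lambda\in[\tfrac12,1]$ rather than the parameter interval used in Theorem~\ref{glambda}.
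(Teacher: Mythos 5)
Your proposal is correct and follows essentially the same route as the paper's own proof: the same coefficient formulas, the same reduction to $|A+Bp_2+Cp_2^2|+1-|p_2|^2$ with identical $A,B,C$, the same case analysis via Lemma~\ref{Y(ABC)}, the same maximizing point $s_3$ and the same extremal function generated by $p(z)=(1-z^2)/(1-2s_3z+z^2)$; your reparametrization $\nu=-(2\lambda+1)$ and the observation that $\phi$ and $\psi$ agree at $\sqrt{x_2}$ because the two branches of $R(A,B,C)$ coincide on $|AB|=|C|(|B|-4|A|)$ are only presentational simplifications (the latter a nice one, replacing the paper's ``tedious computation''). Only a trivial slip: rotational invariance lets you take $c_1\in[0,2]$ (equivalently $p_1\in[0,1]$), not $c_1\in[0,1]$.
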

\begin{proof}
Let $f\in \mathcal{F}_0 (\lambda)$ be of the form \eqref{S}. Then there exists $p\in \mathcal{P}$ of the form \eqref{P} such that 
\begin{equation}\label{c(-1/2)p}
p(z)=\cfrac{2}{2\lambda+1}\bigg(\cfrac{zf''(z)}{f'(z)}+\cfrac{2\lambda+1}{2}\bigg), \quad z\in \mathbb{D}.
\end{equation}
Substituting the series \eqref{S} and \eqref{Pclass} into \eqref{c(-1/2)p} and equating the coefficients we obtain
\begin{align*}
a_2&=\cfrac{2\lambda+1}{4}\,c_1,\\
a_3&=\cfrac{2\lambda+1}{24}(2c_2+(2\lambda+1)c_1^2),
\end{align*}
and
\begin{align*}
 a_4&=\cfrac{(2\lambda+1)}{192}(8c_3+6(2\lambda+1)c_1c_2+c_1^3(2\lambda+1)^2).
\end{align*}
 Since the class $\mathcal{F}_0(\lambda)$ and the functional $H_{2,1}(F_f/2)$ are rotationally invariant, without loss of generality we may assume that $c_1\in [0,2]$. Hence by substituting $a_2$, $a_3$, and $a_4$ in \eqref{Hankel}, and Lemma \ref{Pclass}, we obtain
\begin{align}\label{hankelp}
H_{2,1}(F_f/2)&=\cfrac{1}{4}\bigg(a_2a_4-a_3^2+\cfrac{1}{12}\,a_2^4\bigg)\\&=\cfrac{(2\lambda+1)^2}{36864}\,\big(96c_1c_3+8(2\lambda+1)c_1^2c_2-(2\lambda+1)^2c_1^4-64c_2^2\big)\nonumber\\
&=\cfrac{(2\lambda+1)^2}{2304}\,\bigg((-4\lambda^2+4\lambda+11)p_1^4+4(2\lambda+5)(1-p_1^2)p_1^2p_2\nonumber\\
&\quad\quad-8(p_1^2+2)(1-p_1^2)p_2^2+24(1-p_1^2)(1-|p_2|^2)p_1 p_3\bigg).
\end{align}
The following three possibilities arise:\\

\medskip\noindent
{\bf Case 1:} If $p_1=1$. Then by \eqref{hankelp} we have
$$
|H_{2,1}(F_f/2)|=\cfrac{(2\lambda+1)^2}{2304}\,((-4\lambda^2+4\lambda+11)p_1^4).
$$

\medskip\noindent
{\bf Case 2:} If $p_1=0$. Then by \eqref{hankelp} we have
$$
|H_{2,1}(F_f/2)|=\cfrac{(2\lambda+1)^2}{144}~|p_2|^2\le \cfrac{(2\lambda+1)^2}{144}.
$$

\medskip\noindent
{\bf Case 3:} Let $p_1\in (0,1)$. Since $|p_3|\le 1$, from \eqref{hankelp} it follows that
\begin{align*}
&|H_{2,1}(F_f/2)|\\&\le \cfrac{(2\lambda+1)^2p_1(1-p_1^2)}{96}\bigg(\bigg|\cfrac{(-4\lambda^2+4\lambda+11)p_1^3}{24(1-p_1^2)}+\cfrac{(2\lambda+5)p_1p_2}{6}-\cfrac{(2+p_1^2)p_2^2}{3p_1}\bigg|+1-|p_2|^2\bigg)\\
&=\cfrac{(2\lambda+1)^2 p_1(1-p_1^2)}{96}(|A+Bp_2+Cp_2^2|+1-|p_2|^2)
\end{align*}
where
$$
A=\cfrac{(-4\lambda^2+4\lambda+11)p_1^3}{24(1-p_1^2)}, B=\cfrac{(2\lambda+5)p_1}{6}, \text{ and } C=-\cfrac{(2+p_1^2)}{3p_1}.
$$
We note that $AC<0$. so we can apply case (ii) of Lemma \ref{Y(ABC)} and we obtain that only $|AB|-|C|(|B|-4|A|)\le 0$ is satisfying for some $p_1$, as we can see from below sub-cases.

{\bf 3(a)} Note that
$$
-4AC\biggl(\cfrac{1}{C^2}-1\biggr)-B^2=-\cfrac{2(-4\lambda^2+4\lambda+11)(4-p_1^2)+(2+p_1^2)(2\lambda+5^2)}{36(p_1^2+2)}\le 0.
$$
But the inequality $|B|<2(1-|C|)$ is equivalent to $(2\lambda+9)p_1^2-12p_1+8<0$ which is not true for $p_1\in (0,1)$ and $\lambda\in [1/2,1]$. 

{\bf3(b)} For $0<p_1<1$ and $1/2\le \lambda\le 1$, an easy computation shows that
$$
\min \biggl\{4(1+|C|)^2,-4AC\bigg(\cfrac{1}{C^2}-1\bigg)\biggr\}=-4AC\bigg(\cfrac{1}{C^2}-1\bigg)\le B^2.
$$
The last inequality follows from 3(a).

{\bf3(c)} We first show that $|C|(|B|+4|A|)-|AB|>0$ holds for all $0< p_1<1$  and $1/2\le \lambda\le 1$. A simple calculation shows that 
$$
|C|(|B|+4|A|)-|AB|=\cfrac{H(p_1)}{144(1-p_1^2)},
$$ 
where 
$$
H(x):=16(2\lambda+5)+8(17+6\lambda-8\lambda^2)x^2+(8\lambda^3-20\lambda^2-26\lambda-7)x^4.
$$
A slightly involved computation shows that  $H(x)>0$.
 
{\bf3(d)}
Next we compute
$$
|AB|-|C|(|B|-4|A|)=\cfrac{J(p_1)}{144(1-p_1^2)},
$$
where
$$
J(x):=(-8\lambda^3-44\lambda^2+90\lambda+183)x^4+8(-8\lambda^2+10\lambda+27)x^2-16(2\lambda+5).
$$
The discriminant of the quadratic equation $J(x)=0$ is given by
$$
\Delta=768 (137 + 113\lambda - 31 \lambda^2 - 24 \lambda^3 + 4 \lambda^4)\ge 0
$$
and it has following two solutions
$$
y_1:=\cfrac{-8(-8\lambda^2+10\lambda+27)-\sqrt{\Delta}}{2(-8\lambda^3-44\lambda^2+90\lambda+183)}\le 0
$$
and 
$$
y_2:=\cfrac{-8(-8\lambda^2+10\lambda+27)+\sqrt{\Delta}}{2(-8\lambda^3-44\lambda^2+90\lambda+183)}\ge 0.
$$
A simple computation shows that $y_2< 1$. Hence, it follows that
$$
\left \{
\begin{array}{ll}
	|AB|\le |C|(|B|-4|A|), & \quad \text{ for } 0<p_1\le \sqrt{y_2},
	\\[5mm]
|AB|\ge |C|(|B|-4|A|), & \quad \text{ for }\sqrt{y_2}\le p_1< 1.\\
\end{array}
\right.
$$ 
Therefore by Lemma \ref{Y(ABC)}, we obtain
\begin{equation*}
	|H_{2,1}(F_f/2)|\le \cfrac{(\lambda+1/2)^2p_1(1-p_1^2)}{24}(-|A|+|B|+|C|)=h(p_1),
\end{equation*}
where
$$
h(x):=\cfrac{(2\lambda+1)^2}{2304}\bigg({(4\lambda^2-12\lambda-39)x^4+4(2\lambda+3)x^2+16}\bigg).
$$
If $1/2\le \lambda\le 1$, we have $h'(s_3)=0$, where
\begin{equation}\label{eqp0}
s_3:=\sqrt{\cfrac{-2(2\lambda+3)}{4\lambda^2-12\lambda-39}}\in (0,\sqrt{y_2}]
\end{equation}
and we note that $s_3\le \sqrt{y_2}$. Since $h''(s_3)<0$, we have
\begin{equation}\label{eqh}
|H_{2,1}(F_f/2)|\le h(p_1)\le h(s_3)=\cfrac{(2\lambda+1)^2(12\lambda^2-60\lambda-165)}{576(4\lambda^2-12\lambda-39)}.
\end{equation}

{\bf3(e)} Next we consider $ \sqrt{y_2}\le p_1<1$ in order to complete the proof. Then, by Lemma \ref{Y(ABC)}, we have
	\begin{align*}
	&|H_{2,1}(F_f/2)|\\&\le \cfrac{(\lambda+1/2)^2p_1(1-p_1^2)}{24}	(|A|+|C|)\sqrt{1-\cfrac{B^2}{4AC}}\nonumber \\
	&=\cfrac{(2\lambda+1)^2(16-8p_1^2+(-4\lambda^2+4\lambda+3)p_1^4)}{2304}\sqrt{\cfrac{3((23+12\lambda-4\lambda^2)-(1+2\lambda)^2p_1^2)}{2(2+p_1^2)(-4\lambda^2+4\lambda+11)}}\\&=:T(p_1).
\end{align*}
By differentiating $T$, we obtain
\begin{align*}
T'(x)&=-\cfrac{(2\lambda+1)^2x}{2304(2 + x^2)^2 (-11 - 4 \lambda + 4 \lambda^2)}\sqrt{\cfrac{3(2+p_1^2)(-4\lambda^2+4\lambda+11)}{2((23+12\lambda-4\lambda^2)-(1+2\lambda)^2x^2)}}\\&\quad\times \bigg( 16 (-71 - 44\lambda + 4\lambda^2) + 
32 x^2 (13 + 35 \lambda - 7\lambda^2 - 16\lambda^3 + 4 \lambda^4)\\ &\quad+ 
x^4 (193 + 288\lambda - 344\lambda^2 - 192 \lambda^3 + 208\lambda^4)+4 x^6 (-3 + 2 \lambda) (1 + 2 \lambda)^3 \bigg)\\&<0.
\end{align*}
Therefore, $T$ is decreasing with respect to $x\in [\sqrt{y_2}, 1)$. Hence 
\begin{equation}\label{eqpsi}
T(p_1)\le T(\sqrt{y_2})=\cfrac{32(11+4\lambda-4\lambda^2)(2\lambda+1)^2(a+b\sqrt{c})}{2304~d^2}\sqrt{\cfrac{e-24(1+2\lambda)^2\sqrt{c}}{f+16(-4\lambda^2+4\lambda+11)\sqrt{c}}},
\end{equation}
where
\begin{align*}
a&:=2295+1740\lambda-504\lambda^2-336\lambda^3+48\lambda^4\\
b&:=-48-24\lambda+16\lambda^2\\
c&:= 3 (137 + 113\lambda - 31\lambda^2 - 24 \lambda^3 + 4 \lambda^4)\\
d&: = -8 \lambda^3 - 44 \lambda^2 + 90 \lambda + 183\\
e&:=3 (4317 + 4738\lambda - 104 \lambda^2 - 1040\lambda^3 - 48 \lambda^4 + 32 \lambda^5)\\
f&:=4 (-11 - 4\lambda + 4\lambda^2) (-129 - 70 \lambda + 28\lambda^2 + 8 \lambda^3).
\end{align*}
A tedious computations show that
$$
h(\sqrt{y_2})=T(\sqrt{y_2}),
$$
for each $\lambda\in [1/2,1]$. Therefore \eqref{eqpsi} together with \eqref{eqh} leads to 
$$
T(\sqrt{y_2})\le h(s_3)=\cfrac{(2\lambda+1)^2(12\lambda^2-60\lambda-165)}{576(4\lambda^2-12\lambda-39)}.
$$
Summarizing, from parts 1-3 it follows that
$$|H_{2,1}(F_f/2)|\le \cfrac{(2\lambda+1)^2(12\lambda^2-60\lambda-165)}{576(4\lambda^2-12\lambda-39)}.$$\\[2mm]

We now show that the above inequality is sharp by constructing extreme function. Consider the function $p_3$ of the form
$$
p_3(z)=\cfrac{1-z^2}{1-{2s_3}z+z^2}=1+2s_3z+(4s_3^2-2)z^2+(8s_3^3-6s_3)z^3+\cdots
$$
with $s_3$ given by \eqref{eqp0} and it belongs to the class $\mathcal{P}$ follows from Lemma \ref{Pclass}. The corresponding function $f_3$ can be obtain from \eqref{c(-1/2)p} and  
coefficients of $f_3$ are given by 
$$
a_2=\cfrac{(2\lambda+1)s_3}{2},\,a_3=\cfrac{(2\lambda+1)((3+2\lambda)s_3^2-1)}{6},
$$
 and,
 $$
 a_4=\cfrac{(2\lambda+1)(2\lambda+3)((2\lambda+5)s_3^2-3)s_3}{24}.
$$
From \eqref{Hankel}, it is clear that inequality is sharp for $f_3$. This completes the proof of the theorem.
\end{proof}

Choosing $\lambda=1/2$ and $\lambda=1$, we deduce the following sharp inequalities:
\begin{corollary}
If $f\in \mathcal{C}$, then
$$
|H_{2,1}(F_f/2)|\le 0.030303.
$$	
If $f\in \mathcal{C} (-1/2)$ given by \eqref{S}, then 
$$
|H_{2,1}(F_f/2)|\le 0.070811.
$$	
The inequalities are sharp.
\end{corollary}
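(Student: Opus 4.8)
The plan is to reproduce the scheme already used for Theorems~\ref{a2a1} and~\ref{glambda}. First I would translate the membership $f\in\mathcal{F}_0(\lambda)$ into a Carath\'eodory function by setting $p(z)=\frac{2}{2\lambda+1}\bigl(\frac{zf''(z)}{f'(z)}+\frac{2\lambda+1}{2}\bigr)\in\mathcal{P}$, expand $p$ and $f$ as power series, and compare the coefficients of $z,z^2,z^3$ in the relation $(2\lambda+1)(p(z)-1)f'(z)=2zf''(z)$ to obtain $a_2,a_3,a_4$ as explicit polynomials in $c_1,c_2,c_3$. Substituting these into $H_{2,1}(F_f/2)=\frac14\bigl(a_2a_4-a_3^2+\frac1{12}a_2^4\bigr)$ collapses, after simplification, to $\frac{(2\lambda+1)^2}{36864}\bigl(96c_1c_3+8(2\lambda+1)c_1^2c_2-(2\lambda+1)^2c_1^4-64c_2^2\bigr)$. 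Because both $\mathcal{F}_0(\lambda)$ and $H_{2,1}(F_f/2)$ are rotation invariant, I may take $c_1\in[0,2]$, and then Lemma~\ref{Pclass} lets me rewrite $c_1,c_2,c_3$ through $p_1\in[0,1]$ and $p_2,p_3\in\overline{\mathbb{D}}$, producing a two-variable expression whose $p_3$-part is affine with coefficient $24(1-p_1^2)p_1$.

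I would then split into the cases $p_1=1$, $p_1=0$, and $p_1\in(0,1)$; the first two are immediate. For $p_1\in(0,1)$, applying the triangle inequality and $|p_3|\le1$ bounds $|H_{2,1}(F_f/2)|$ by $\frac{(2\lambda+1)^2p_1(1-p_1^2)}{96}\bigl(|A+Bp_2+Cp_2^2|+1-|p_2|^2\bigr)$, where
$$
A=\frac{(-4\lambda^2+4\lambda+11)p_1^3}{24(1-p_1^2)},\qquad B=\frac{(2\lambda+5)p_1}{6},\qquad C=-\frac{2+p_1^2}{3p_1},
$$
so that the bracketed quantity is at most $Y(A,B,C)$. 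Since $AC<0$ for all $p_1\in(0,1)$, Lemma~\ref{Y(ABC)}(ii) applies, and the bulk of the work is to verify, uniformly over $(p_1,\lambda)\in(0,1)\times[1/2,1]$, that the first two alternatives are vacuous (namely $|B|<2(1-|C|)$ fails while $-4AC(C^{-2}-1)\le B^2$), and that inside $R(A,B,C)$ the condition $|C|(|B|+4|A|)\le|AB|$ never holds; this leaves precisely the alternative $-|A|+|B|+|C|$ when $|AB|\le|C|(|B|-4|A|)$ and $(|A|+|C|)\sqrt{1-B^2/(4AC)}$ otherwise.

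Next I would pin down the switching point: after clearing the positive factor $144(1-p_1^2)$, the inequality $|AB|\le|C|(|B|-4|A|)$ becomes a quadratic in $x=p_1^2$ whose discriminant is positive and which has exactly one root $y_2\in(0,1)$, so the first alternative governs for $0<p_1\le\sqrt{y_2}$ and the second for $\sqrt{y_2}\le p_1<1$. On the lower range the estimate reduces to the quartic
$$
h(p_1)=\frac{(2\lambda+1)^2}{2304}\bigl((4\lambda^2-12\lambda-39)p_1^4+4(2\lambda+3)p_1^2+16\bigr),
$$
whose only interior critical point is $s_3=\sqrt{-2(2\lambda+3)/(4\lambda^2-12\lambda-39)}$, which I must check is real and lies in $(0,\sqrt{y_2}]$; evaluating $h(s_3)$ produces the claimed bound. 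On the upper range the estimate is a function $T(p_1)$ carrying a square root; I would differentiate, show $T'<0$ on $[\sqrt{y_2},1)$ by exhibiting the relevant factor as a manifestly positive polynomial in $p_1$ with $\lambda$-polynomial coefficients, conclude $T(p_1)\le T(\sqrt{y_2})$, and then verify the matching identity $T(\sqrt{y_2})=h(\sqrt{y_2})\le h(s_3)$. Finally, for sharpness I would take $p_3(z)=(1-z^2)/(1-2s_3z+z^2)\in\mathcal{P}$ by Lemma~\ref{Pclass}, recover the coefficients of the corresponding $f_3\in\mathcal{F}_0(\lambda)$ from the defining relation, and check equality holds in \eqref{Hankel}; specializing $\lambda=1/2$ and $\lambda=1$ then gives the two numerical corollaries.

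The genuinely laborious step — rather than merely routine — will be the case analysis for Lemma~\ref{Y(ABC)}(ii): one must establish the definiteness of several polynomials in the two variables $p_1$ and $\lambda$ in order to rule out all but the intended branch, and then the monotonicity of $T$ together with the identity $T(\sqrt{y_2})=h(\sqrt{y_2})$, each concealing a heavy but elementary computation that has to be carried out with care over the full parameter ranges.
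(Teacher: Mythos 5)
Your proposal is correct and follows essentially the same route as the paper: the paper obtains this corollary by simply setting $\lambda=1/2$ and $\lambda=1$ in the already-established sharp bound $\frac{(2\lambda+1)^2(12\lambda^2-60\lambda-165)}{576(4\lambda^2-12\lambda-39)}$ for the class $\mathcal{F}_0(\lambda)$, and your outline reproduces that theorem's proof step for step (same Carath\'eodory transfer, same $A,B,C$ and Lemma \ref{Y(ABC)} case analysis, same $y_2$, $h$, $s_3$, $T$, and the same extremal construction via $p_3(z)=(1-z^2)/(1-2s_3z+z^2)$) before specializing. Nothing essential is missing.
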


\bigskip
\noindent
{\bf Acknowledgement.}
The first author thank SERB-CRG and the second author thank IIT Bhubaneswar for providing Institute Post Doctoral fellowship.

\end{document}